\newcommand{\mcp}{\mathcal{P}}
\DeclareMathOperator{\crn}{crn} 
\definecolor{darkgreen}{rgb}{0.0,0.5,0.0}
\definecolor{darkblue}{rgb}{0.0,0.0,0.3}
\definecolor{darkmagneta}{rgb}{0.5,0.0,0.5}
\definecolor{darkred}{rgb}{0.5,0.0,0.0}
\theoremstyle{plain} 
\newtheorem{thm}{Theorem}[section]
\newtheorem{con}[thm]{Conjecture}
\theoremstyle{definition}
\newtheorem{ex}[thm]{Example}
\theoremstyle{remark}
\newtheorem{rem}[thm]{Remark}
\title{On the class reconstruction number of trees} %
\author[1]{Ilia Krasikov\thanks{Ilia.Krasikov@brunel.ac.uk}}%
\author[2]{Yehuda Roditty\thanks{yr1@bezeqint.net}}%
\author[3]{Bhalchandra D. Thatte\thanks{thatte@ufmg.br}} \affil[1]{Department of
  Mathematics, Brunel Univertsity, London, U.K.}%
\affil[2]{School of Computer Science, Tel Aviv University, Israel and School of
  Computer Science, The Academic College of Tel-Aviv-Yaffo, Israel}%
\affil[3]{Departamento de Matemática, Universidade Federal de Minas Gerais, Belo
  Horizonte, Brazil}%
\date{}
\begin{document}

\maketitle

\begin{abstract}
Harary and Lauri conjectured that the class reconstruction number of trees is 2, that is, each tree has two unlabelled vertex-deleted subtrees that are not both in the deck of any other tree.  We show that each tree $T$ can be reconstructed up to isomorphism given two of its unlabelled subgraphs $T-u$ and $T-v$ under the assumption that $u$ and $v$ are chosen in a particular way.  Our result does not completely resolve the conjecture of Harary and Lauri since the special property defining $u$ and $v$ cannot be recognised from the given subtrees $T-u$ and $T-v$.

\end{abstract}

\section{Introduction}
Let $G$ be a finite simple graph. A {\em card} of $G$ is an unlabelled subgraph
of $G$ obtained by deleting a vertex of $G$. The collection (multiset) of cards
of $G$ is called the {\em deck} of $G$. For the standard graph theoretic
terminology as well as terminology about graph reconstruction, we refer to Bondy
and Murty \cite{bondy.murty.2008}.

The {\em reconstruction number} of a graph $G$, denoted by $rn(G)$, is the
minimum number of graphs in the deck of $G$ required to reconstruct $G$ up to
isomorphism.  Let $\mcp$ be a class of finite simple graphs, and let $G$ be a
graph in $\mcp$. The {\em class reconstruction number} of $G$ is the minimum
number of cards in the deck of $G$ that do not all belong to the deck of any
other graph in $\mcp$. It is denoted by $\crn_{\mcp}(G)$. We write
$\crn{(\mcp)} \coloneqq \max_G\{\crn_\mcp(G)\}$, and call it the class
reconstruction number of the class $\mcp$. The above notions were introduced by
Harary and Plantholt \cite{harary.plantholt.1985}. In this note, we take $\mcp$
to be the class of trees, and write $\crn(T)$ for the class reconstruction
number of a tree $T$.

The reconstruction number of a graph was later called the {\em
  ally-reconstruction number} by Myrvold \cite{myrvold.1990} in order to
distinguish it from the {\em adversary reconstruction number}, which is the size
of the largest subset $C$ of the deck of a graph $G$ such that $C$ determines
$G$ uniquely, but no proper subset of $C$ determines $G$ uniquely. Myrvold
\cite{myrvold.1990} refers to the class reconstruction number as {\em ally-weak
  reconstruction number.}

We refer to a survey \cite{asciak.etal.2010} for open questions related to
graph reconstruction numbers. Here we mention only a few questions and results.

Harary and Lauri \cite{harary.lauri.1987} showed that the class reconstruction
number of the class of maximal planar graphs is 2. They characterised maximal
planar graphs that have class reconstruction number 1. 

It was conjectured by Harary and Plantholt \cite{harary.plantholt.1985} that all
trees with 5 or more vertices have ally-reconstruction number equal to 3. This
conjecture was proved by Myrvold \cite{myrvold.1990}. It was proved by Harary
and Lauri \cite{harary.lauri.1988} that the class reconstruction number of the
class of trees is at most 3. They made the following conjecture.

\begin{con}[Harary and Lauri \cite{harary.lauri.1988}]\label{con-hl}
  The class reconstruction number of the class of trees is 2.
\end{con}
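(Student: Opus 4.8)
The plan is to establish the upper bound $\crn(T)\le 2$ for every tree $T$ on $n\ge 5$ vertices, the matching lower bound being immediate (no single card of a path $P_n$ determines it among trees, so $\crn(P_n)\ge 2$). First I would restrict attention to \emph{leaf} deletions: if $u$ is a leaf of $T$, then the card $T-u$ is itself a tree on $n-1$ vertices, and since a vertex deletion from a tree is connected only when the deleted vertex is a leaf, any rival tree $T'$ carrying $T-u$ in its deck must also obtain it by deleting a leaf. Thus, writing $\mcf_u$ for the set of trees obtained from $T-u$ by attaching a single new pendant vertex (in all non-isomorphic ways), every tree sharing the card $T-u$ lies in $\mcf_u$, and likewise for $v$. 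Recovering $n$ from either card is immediate, and the whole problem collapses to the statement that a suitable pair of leaves satisfies $\mcf_u\cap\mcf_v=\{T\}$.

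Next I would fix the pair $u,v$ by a canonical structural rule, my first candidate being the two endpoints of a longest (diametral) path of $T$. The two cards then share the common core $T-u-v$, and the point of the argument is that the position of each removed leaf is forced: reattaching the pendant to $T-u$ anywhere other than the neighbour of $u$ on the diametral path either shortens the recoverable longest path or creates a vertex configuration incompatible with the second card $T-v$. Concretely, for a rival $T'\in\mcf_u\cap\mcf_v$ I would fix an isomorphism realising $T-u$ inside $T'$, track where the second scar (the attachment site of $v$) must sit under this identification, and show that the only way both scars can be filled consistently is the reattachment that reproduces $T$. The cross-referencing of the two cards along their common core $T-u-v$ is exactly what upgrades the inherently ambiguous single-card reattachment into a unique one.

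The main obstacle is twofold. The first, more technical, difficulty is that the rigidity of the diametral endpoints degrades for highly symmetric trees and for trees whose longest path is far from unique; here I expect to need a finer canonical choice of $u$ and $v$ (for instance a deepest leaf under a canonical rooting at the centre, paired with a sibling or a second deepest leaf), together with a careful analysis of the automorphisms of $T-u-v$. The second, and genuinely serious, difficulty is that any rule selecting $u,v$ is a \emph{global} invariant of $T$: the property ``$u,v$ are the chosen special leaves'' is not visible in the unlabelled pair $T-u,\,T-v$, so one must still rule out an impostor tree $T'$ that presents exactly these two unlabelled cards through a pair of \emph{non}-special vertices. Closing precisely this gap—manufacturing from $T-u$ and $T-v$ an intrinsic certificate that no other tree can reproduce, rather than merely an algorithm that recovers $T$ under the promise that $u,v$ are special—is the hard part, and is exactly the point at which such an approach falls short of the full conjecture.
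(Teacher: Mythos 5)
The statement you are trying to prove is stated in the paper as an open conjecture: the paper contains no proof of it, only a conditional partial result (Theorem~\ref{thm-main}) and an explicit disclaimer that the conjecture remains unsettled. So there is no ``paper proof'' to match; the question is whether your sketch closes the gap the paper leaves open, and it does not. Your reduction to $\mcf_u\cap\mcf_v=\{T\}$ for a canonical pair of leaves is sound as far as it goes (a card of a tree that is itself a tree on $n-1$ vertices must come from deleting a leaf), but the entire content of the conjecture is concentrated in the step you defer: showing that for \emph{some} choice of $u,v$ no impostor $T'$ realises both unlabelled cards, possibly via vertices bearing no relation to your canonical rule. You name this difficulty candidly in your last paragraph, and naming it is not the same as overcoming it. The ``scar-tracking'' argument you outline implicitly assumes you may fix an isomorphism of $T-u$ with $T'-w$ that respects the diametral structure, but an adversarial $T'$ can present the card $T-u$ through a leaf $w$ whose reattachment site is unrelated to the scar of $u$, and nothing in your sketch rules this out.

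For comparison, the paper's Theorem~\ref{thm-main} takes a pair $u,v$ with $u$ a leaf of a brush and $v$ the root of that brush (so one leaf card and one near-leaf card, with $u,v$ adjacent), and uses the similarity theorems of Harary--Palmer and Kirkpatrick--Klawe--Corneil (Theorem~\ref{thm-hp0}) to show that \emph{if the rival tree's vertices $u',v'$ satisfy the same structural condition}, then $T\cong T'$. Crucially, the paper's Example~\ref{ex0} then exhibits non-isomorphic trees $P$ and $Q$ sharing both cards, where the special condition holds in $P$ but fails in $Q$: this is a concrete instance of exactly the impostor phenomenon you flag, and it shows that a promise-based argument of the kind you propose (and the paper proves) cannot settle Conjecture~\ref{con-hl} without an additional idea that makes the special pair, or some consequence of it, recognisable from the two unlabelled cards. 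Your choice of two diametral leaves is a genuinely different canonical pair from the paper's leaf-plus-brush-root, and would need only part (i) of Theorem~\ref{thm-hp0} rather than both parts, but until you either prove that no analogue of Example~\ref{ex0} exists for your pair or manufacture an intrinsic certificate from the cards, the proposal is a plan with the decisive step missing, not a proof.
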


To our knowledge there has not been much progress on the above conjecture.

\begin{ex} Figure~\ref{fig:ex1} shows trees $T_1, T_2,T_3$ which cannot be
  reconstructed from 2 arbitrarily chosen cards from the deck. We have
  $T_1-u \cong T_2-u, T_1-v \cong T_2-v$ and
  $T_1-u \cong T_3-u, T_1-w \cong T_3-w$. Here $\crn(T_1) = 1$ since $T_1$ can
  be constructed up to isomorphism from the card $T_1-z$.

\begin{figure}[H]
  \centering
  \includegraphics[scale=1]{./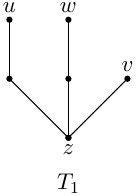}\hspace*{1cm}
  \includegraphics[scale=1]{./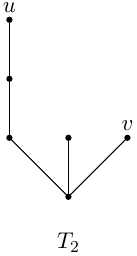}\hspace*{1cm}
  \includegraphics[scale=1]{./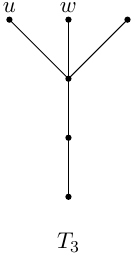}
  \caption{$T_1-u \cong T_2-u, T_1-v \cong T_2-v$ and
    $T_1-u \cong T_3-u, T_1-w \cong T_3-w$}
  \label{fig:ex1}
\end{figure}
\end{ex}

\begin{rem}
  We have $\crn(T) = 1$ if and only if there exists a vertex $u \in V(T)$ such
  that each component of $T-u$ is $K_2$ or $K_1$. This condition is equivalent
  to being `starlike' as defined by Harary and Lauri (see
  \cite{harary.lauri.1988}, Theorem 3.1).
\end{rem}

Let $T$ be a tree on at least 3 vertices. A {\em brush} in $T$ is a maximal
subgraph $H$ of $T$ that is isomorphic to $K_{1,k}$ for some $k$, such that $k$
vertices of $H$ have degree 1 in $T$, one vertex $v$ of $H$, called the {\em
  root of the brush}, has degree at least $k+1$ in $T$, and $v$ is adjacent to at
most one vertex not in $H$. A brush with $k$ edges is called a
$k$-brush.

The purpose of this note is to show the following result.

\begin{thm} \label{thm-main} Let $T,T'$ be two trees. Let $u,v\in V(T)$ and
  $u',v'\in V(T')$ such that
  \begin{enumerate}
  \item \label{c1} $u$ is a vertex of degree 1 in a brush with root $v$
  \item \label{c2} $u'$ is a vertex of degree 1 in a brush with root $v'$
  \item \label{c3} $T-u \cong T'-u'$ and $T-v\cong T'-v'$.
  \end{enumerate}
  Then $T$ and $T'$ are isomorphic.
\end{thm}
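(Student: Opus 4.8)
The plan is to reconstruct $T$ from the two given cards by first recovering the structure of the brush at $v$, and then using the card $T-v$ to pin down how that brush attaches to the rest of the tree.

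The plan is to reconstruct $T$ from the two given cards by first isolating the ``rest of the tree'' hanging off the brush, and then using the second card to locate the point at which the brush is attached. Write $k$ for the size of the brush at $v$. Since $v$ has degree at least $k+1$ but is adjacent to at most one vertex outside the brush, $v$ in fact has degree exactly $k+1$ and has a unique neighbour $w$ that is not one of the $k$ brush leaves. Deleting $v$ therefore splits $T$ into the $k$ brush leaves, each an isolated vertex, together with one further component $C_w$ (the subtree on $w$'s side), so that $T-v \cong k\,K_1 \sqcup C_w$. The analogous decomposition $T'-v' \cong k'\,K_1 \sqcup C'_{w'}$ holds for $T'$, with brush size $k'$ and attachment point $w'$.

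First I would extract $k$ and $C_w$ from the card $T-v$. When $C_w$ has at least two vertices it is the unique component of $T-v$ of order $\ge 2$, and the number of isolated vertices equals $k$; hence $T-v \cong T'-v'$ forces $k=k'$ and $C_w \cong C'_{w'}=:C$. The degenerate possibility $C_w\cong K_1$ means $T\cong K_{1,k+1}$ is a star, and here $T-v\cong (k+1)K_1$ consists only of isolated vertices, which pins down $T'$ as the same star directly; so I treat that case separately and assume from now on that $C$ has at least two vertices. The problem is now reduced to a statement about rooted trees: since $T$ is obtained from $C$ by adding a vertex $v$ joined to $w$ and then attaching $k$ leaves to $v$ (and likewise for $T'$ with $w'$ and $k'=k$), it suffices to prove that $(C,w)$ and $(C,w')$ are isomorphic as rooted trees. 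Indeed, a rooted isomorphism carrying $w$ to $w'$ extends to a graph isomorphism $T\to T'$ by sending $v\mapsto v'$ and matching the $k$ leaves.

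To recover the attachment point I would use the card $T-u \cong T'-u'$. Deleting the brush leaf $u$ leaves $v$ with exactly $k-1$ leaf-neighbours and the single non-leaf neighbour $w$ (non-leaf because $C$ has order $\ge 2$), so in $T-u$ the vertex $v$ has degree exactly $k$ and is the root of a $(k-1)$-brush whose removal, together with its $k-1$ leaves, returns precisely the rooted tree $(C,w)$. The same description holds for $v'$ in $T'-u'$. If an isomorphism $T-u \to T'-u'$ could be chosen to send $v$ to $v'$, it would restrict to a rooted isomorphism $(C,w)\to(C,w')$ and the proof would be complete.

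The main obstacle is exactly this matching: an arbitrary isomorphism $T-u\to T'-u'$ need not send $v$ to $v'$, because $T-u$ may contain several vertices sharing the brush-root profile of $v$ (degree $k$, with $k-1$ leaf-neighbours and one non-leaf neighbour), and different such vertices can sit over different points of $C$. Equivalently, the content of the theorem is that whenever a vertex $a$ of this profile in $T-u$ has the property that deleting its brush leaves a tree isomorphic to $C$, the tree obtained by promoting its $(k-1)$-brush to a $k$-brush is independent, up to isomorphism, of the choice of $a$. I would resolve this by using the independently known tree $C$ from the first card to single out the admissible brushes, and then show by an automorphism argument that any two admissible choices yield the same rooted tree $(C,w)$ up to isomorphism; the cases $k=1$, $w$ a leaf of $C$, and $C$ with nontrivial symmetry at $w$ would be checked separately. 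This step, rather than the structural reductions, is where I expect the real work to lie.
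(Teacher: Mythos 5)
Your structural reduction is sound and runs parallel to the paper's: both arguments come down to showing that the isomorphism $T-u\to T'-u'$ can be modified by an automorphism of $T-u$ so that it carries $v$ to $v'$. But the proposal stops exactly at that point. The claim you defer to ``an automorphism argument'' --- that any two admissible brush roots in $T-u$ sitting over a copy of $C$ yield the same rooted tree up to isomorphism --- is not a routine verification; it is the entire content of the theorem, and it cannot be established by the case analysis you sketch without a genuinely nontrivial input. Concretely, in the case $k=1$ your claim is precisely the statement that two leaves $a,b$ of a tree $S$ with $S-a\cong S-b$ must be similar (lie in the same orbit of $\mathrm{Aut}(S)$); this is a theorem of Harary and Palmer \cite{hp1966}, and it is not obvious --- pseudosimilar vertices (vertices $a,b$ with $S-a\cong S-b$ that are not similar) do exist in trees, just not among leaves. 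For $k\ge 2$ the vertex $v$ is a near-leaf of $T-u$, and the corresponding statement is Corollary~3.3 of Kirkpatrick, Klawe and Corneil \cite{kkc1983}. Without these results your ``admissible choices'' could in principle be pseudosimilar, and the promoted trees non-isomorphic.

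For comparison, the paper's proof is short precisely because it goes straight to this point: taking $f\colon T-u\to T'-u'$ with $f(w)=v'$, the two hypotheses combine (ignoring isolated vertices) to give $(T-u)-w\cong T'-v'\cong T-v=(T-u)-v$, whence $v$ and $w$ are similar in $T-u$ by the cited theorems, and composing $f$ with the resulting automorphism produces an isomorphism sending $v$ to $v'$, which extends over $u\mapsto u'$. So your plan is recoverable, but only by importing the similarity theorems (or reproving them), which is the step you have left open.
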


The proof of Theorem~\ref{thm-main} is based on considerations of similarity of
vertices in graphs. We define this notion in Section~\ref{sec-main}, followed by
a proof of Theorem~\ref{thm-main}. We then give an example that shows that the
above theorem doesn't settle Conjecture~\ref{con-hl}.

\section{A proof of the main theorem}\label{sec-main}

In a graph $G$, two vertices $u,v$ are called {\em similar} if there is an
automorphism of $G$ that maps $u$ to $v$.  The {\em orbit} of a vertex $u$ is
the set of all vertices similar to $u$. In a tree $T$ with set of leaves $L$,
the {\em near-leaves} are the leaves of $T-L$. We will use the following
properties of similar vertices in trees in our proof of Theorem~\ref{thm-main}.

\begin{thm}\label{thm-hp0}
  \begin{enumerate}[label=(\roman*)]
  \item (Theorem 4, \cite{hp1966}) If $T$ is a tree with leaves $a$ and $b$ such
    that $T-a\cong T-b$, then $a$ and $b$ are similar.
  \item (Corollary 3.3, \cite{kkc1983}) If $T$ is a tree with near-leaves $a$
    and $b$ such that $T-a\cong T-b$, then $a$ and $b$ are similar.
  \end{enumerate}
\end{thm}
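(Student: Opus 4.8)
The plan is to prove both statements by the same mechanism: from an isomorphism between the two given cards, manufacture an automorphism of $T$ that carries $a$ to $b$. For part~(i), let $a'$ and $b'$ be the neighbours in $T$ of the leaves $a$ and $b$, and let $\phi\colon T-a\to T-b$ be an isomorphism, regarded as an adjacency-preserving bijection $V(T)\setminus\{a\}\to V(T)\setminus\{b\}$. Since $b$ is deleted from the target (so lies outside the image of $\phi$) and $a$ lies outside the domain, the map $\hat\phi$ that agrees with $\phi$ on $V(T)\setminus\{a\}$ and sends $a\mapsto b$ is a bijection of $V(T)$. It preserves every edge of $T$ avoiding $a$, since those are exactly the edges of $T-a$. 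The only edge it can fail to preserve is the pendant edge $aa'$, which it sends to $b\,\phi(a')$; as $b$'s unique neighbour is $b'$, the map $\hat\phi$ is an automorphism of $T$, whence $a\sim b$, precisely when $\phi(a')=b'$. So the entire problem reduces to arranging that the chosen isomorphism sends the neighbour of $a$ to the neighbour of $b$.

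To arrange this I would exploit the freedom in the choice of $\phi$: every isomorphism $T-a\to T-b$ has the form $\alpha\circ\phi$ with $\alpha\in\mathrm{Aut}(T-b)$, so it suffices to show that $\phi(a')$ and $b'$ lie in a common orbit of $\mathrm{Aut}(T-b)$. The natural reduction is to strip both leaves at once: writing $S:=T-\{a,b\}$ (a tree, as $a,b$ are non-adjacent leaves), one has $T-a\cong S$ with a pendant added at $b'$ and $T-b\cong S$ with a pendant added at $a'$, so the hypothesis says exactly that the two pendant-augmentations of $S$ at $a'$ and $b'$ are isomorphic. The case $d_T(a,b)=2$ needs no hypothesis whatsoever: two leaves at distance $2$ share their common neighbour, so the transposition $(a\,b)$ is already an automorphism of $T$. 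For $d_T(a,b)\ge 3$ I would induct on $|V(T)|$, peeling a suitable leaf (or a matched pair of similar branches) off $S$ to obtain a strictly smaller tree still satisfying the hypotheses, apply the inductive conclusion to place $a'$ and $b'$ in a common orbit, and lift the resulting partial symmetry back up the path $a=p_0,p_1,\dots,p_k=b$.

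Establishing this orbit-matching is the main obstacle, and it is genuinely global in character: $\deg_T a'$ and $\deg_T b'$ need not even be equal, so the matching cannot be read off from local degree data and must be forced entirely by the symmetry implicit in $T-a\cong T-b$. This is exactly the point at which the leaf hypothesis is used in an essential way, and getting the peeling step to preserve the pendant-augmentation structure of $S$ is where the real work lies.

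For part~(ii) the same extension principle applies, but deleting a near-leaf is messier. If $a$ has leaf-neighbours $\ell_1,\dots,\ell_t$ and one non-leaf neighbour $a'$, then $T-a$ is a \emph{forest}: a large component together with the $t$ isolated vertices $\ell_1,\dots,\ell_t$. I would first use $T-a\cong T-b$ to match the isolated vertices—forcing $b$ to have exactly $t$ leaf-neighbours—and to identify the large components, then restrict attention to those components. The extension step now re-attaches the entire pendant star at $a$ and maps it to the pendant star at $b$, so it succeeds once the unique non-leaf neighbour $a'$ is sent to $b'$; as in part~(i), this is an orbit-matching statement, to be settled by the same inductive argument applied to the large component. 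The extra bookkeeping of the detached pendant leaves, and the fact that the relevant cards are forests rather than trees, make this the more delicate of the two cases, but the conceptual core—converting the card isomorphism into an automorphism by controlling the image of a single distinguished neighbour—is identical.
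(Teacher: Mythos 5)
The first thing to note is that the paper does not prove this statement at all: Theorem~\ref{thm-hp0} is imported verbatim from the literature, part~(i) from Harary and Palmer \cite{hp1966} and part~(ii) from Kirkpatrick, Klawe and Corneil \cite{kkc1983}, and is then used as a black box in the proof of Theorem~\ref{thm-main}. So the only question is whether your argument is complete on its own, and it is not. Your ``extension principle'' is correct: an isomorphism $\phi\colon T-a\to T-b$ extends to an automorphism of $T$ carrying $a$ to $b$ if and only if $\phi(a')=b'$ (and your bookkeeping for the detached leaf-stars in part~(ii) is also fine). But notice that this equivalence runs in both directions: if $a$ and $b$ are similar, then an automorphism with $a\mapsto b$ restricts to exactly such a $\phi$, since the unique neighbour of the leaf $a$ must go to the unique neighbour of the leaf $b$. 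Hence your claim that $\phi$ can be chosen with $\phi(a')=b'$ --- equivalently, that $\phi(a')$ and $b'$ lie in a common orbit of $\mathrm{Aut}(T-b)$ --- is not a reduction of the theorem but a restatement of it. The entire content of the theorem is concentrated in that orbit-matching claim.

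And that claim is precisely what the proposal never proves. The inductive scheme is only gestured at: you do not say which leaf (or matched pair of branches) of $S$ is peeled, you do not verify that the smaller tree still satisfies the hypothesis $T'-a\cong T'-b$ (deleting an arbitrary leaf can destroy it), and the ``lift the resulting partial symmetry back up the path'' step is not described at all. You flag this yourself as ``the main obstacle'' and ``where the real work lies,'' and that self-assessment is accurate in a sharp sense: pseudosimilar vertex pairs genuinely exist in trees (for vertices that are neither leaves nor near-leaves), so any correct proof must exploit the leaf, respectively near-leaf, hypothesis at exactly this point, and a generic peel-and-lift induction cannot be presumed to go through. As written, the proposal establishes the easy equivalence between the theorem and the orbit-matching statement, plus the trivial case $d_T(a,b)=2$, and leaves the theorem itself unproved; part~(ii) then defers to the same unestablished induction. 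To complete it you would need an actual mechanism --- for instance the classical device of iterating $\phi$ on $b$ and analysing the sequence $b,\phi(b),\phi^2(b),\dots$, which consists of distinct vertices and can only terminate at $a$, or an induction anchored at the centre of $T$ --- or simply cite the two references, as the paper does.
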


\begin{proof}[Proof of Theorem~\ref{thm-main}]
  Let $f$ be an isomorphism from $T-u$ to $T'-u'$. Assume that $f(w) = v'$ and
  $f(v) = w'$ for some vertices $w$ and $w'$ in $T$ and $T'$, respectively. We
  have, ignoring isolated vertices in subtrees,
  \[
    (T-u)-w\cong (T'-u')-v' = T'-v' \cong T-v = (T-u)-v \cong (T'-u')-w'.
  \]
  Thus $(T-u)-w\cong (T-u)-v$, and since $v$ and $w$ are both near-leaf vertices
  or both leaf vertices of $T-u$, they are similar by Theorem~\ref{thm-hp0}.  In
  other words, there is an automorphism $g$ of $T-u$ such that $g(v) = w$. So
  the composition $fg$ is an isomorphism from $T-u$ to $T'-u'$. Now we have
  $(fg)(v) = v'$. This isomorphism extends to an isomorphism from $T$ to $T'$.
\end{proof}

The following example shows that the property that $u$ and $v$ are adjacent is
not recognisable from the partial deck $\{T-u,T-v\}$. Hence, our main theorem
does not settle Conjecture~\ref{con-hl}.

\begin{ex}\label{ex0}
  We show that in general the deck $\{T-u,T-v\}$, where $u$ and $v$ are as
  defined in Theorem~\ref{thm-main}, is not sufficient to construct the tree
  $T$. We construct non-isomorphic trees $P$ and $Q$, with vertices
  $u,v\in V(P)$ and $u',v'\in V(Q)$, such that conditions~\ref{c1} and ~\ref{c3}
  are satisfied, and the vertex $u'$ is a vertex in a brush in $T'$, but $u'$
  and $v'$ are not adjacent.

  Let $T$ be a tree rooted at $t$ with $u$ a radial vertex (i.e., an end vertex
  that lies on a longest path in $T$) in a brush and $v$ the vertex adjacent to
  it. We make 4 copies $T_{i}, i = 1, \ldots, 4$ of $T$, with corresponding
  vertices $t_i,u_i, v_i$. Moreover, assume that $T, T-u, T-v$ have the same
  height. Let $P$ and $Q$ be two trees as shows in Figure~\ref{fig-trees-pq}
  \begin{figure}[H]
    \centering \includegraphics[scale=0.85]{./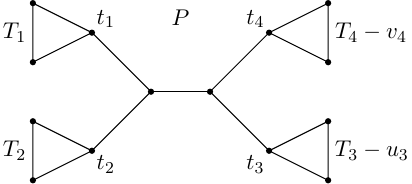}\hspace*{1cm}
    \includegraphics[scale=0.85]{./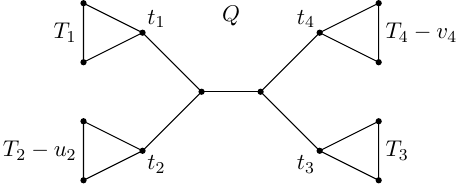}
    \caption{Trees $P$ and $Q$}
    \label{fig-trees-pq}
  \end{figure}
  Let $(u,v) = (u_1,v_1)$ and $(u',v') = (u_3,v_1)$. We have $P-u_1 \cong Q-u_3$,
  and $P-v_1 \cong Q - v_1$. But $P \not\cong Q$.
\end{ex}

It would be interesting to characterise trees for which the condition on $u$ and
$v$ can be recognised from the given cards.

\section{Acknowledgements}
\label{sec:acknowledgements}
The third author would like to thank FAPEMIG (Fundação de Amparo à Pesquisa do
Estado de Minas Gerais, Brasil) (2023-2025), Process no.  APQ-02018-22, for
financial support.


\providecommand{\bysame}{\leavevmode\hbox to3em{\hrulefill}\thinspace}
\providecommand{\MR}{\relax\ifhmode\unskip\space\fi MR }
\providecommand{\MRhref}[2]{%
  \href{http://www.ams.org/mathscinet-getitem?mr=#1}{#2}
}
\providecommand{\href}[2]{#2}

\end{document}